\newcommand{\N}{\mathbb{N}}
\newcommand{\R}{\mathbb{R}}
\newcommand{\s}[1]{\mathcal{#1}}
\newcommand{\set}[1]{\left\{ #1 \right\}}
\newcommand{\setp}[2]{\left\{ #1 \: \middle| \: #2 \right\}}
\newcommand{\tr}{{\rm tr}}
\newcommand{\br}[1]{\left\langle #1 \right\rangle}
\newcommand{\paren}[1]{\left( #1 \right)}
\renewcommand{\vec}{\mbox{vec }}
\renewcommand{\epsilon}{\varepsilon}
\title{Computing expected transition events in reducible Markov
Chains\footnotemark[1]}
\author{Brian  D. Ewald\footnotemark[2] \and Jeffrey Humpherys\footnotemark[3]\ \footnotemark[4] \and Jeremy M. West\footnotemark[3]\ \footnotemark[5]}
\begin{document}

\maketitle

\renewcommand{\thefootnote}{\fnsymbol{footnote}}

\footnotetext[1]{Special thanks to C. Shane Reese for his useful conversations. We also thank our referees for their excellent comments and suggestions.} \footnotetext[2]{Department of Mathematics, Florida State University, Tallahassee, FL 32306.}
\footnotetext[3]{Department of Mathematics, Brigham Young University, Provo, UT 84602.} \footnotetext[4]{Work partially supported by NSF Grant No. DMS-6007721.} 
\footnotetext[5]{Work partially supported by BYU Graduate Research Fellowship Award.}

\renewcommand{\thefootnote}{\arabic{footnote}}

\begin{abstract}
We present a closed-form, computable expression for the expected number of times any transition event occurs during the transient phase of a reducible Markov chain. Examples of events include time to absorption, number of visits to a state, traversals of a particular transition, loops from a state to itself, and arrivals to a state from a particular subset of states. We give an analogous expression for time-average events, which describe the steady-state behavior of reducible chains as well as the long-term behavior of irreducible chains.
\end{abstract}

\begin{keywords}
Markov chains, reducible matrices, generalized inverses
\end{keywords}

\begin{AMS}
60J22, 15A09
\end{AMS}

\section{Introduction}\label{introduction-section}

In this paper we present a method for computing the expectation of transition events that occur during the transient phase of a reducible Markov chain using the Hadamard product and a generalized $(1,2)$-inverse; see \cite{campbell-meyer}. Some examples of transition events are the time to absorption, the number of visits to a state, the number of traversals of a transition, the number of loops from a state to itself, and the arrivals to a state from a particular subset of states.

Meyer \cite{meyer-siam-review} showed among other things that the group generalized inverse, a special case of the Drazin inverse, can be used to determine (i) the expected number of visits to any transient state, and (ii) the probability of absorption into a particular state. Whereas Meyer's method determines the expected number of occurrences of state events, our method computes the expected number of occurrences of transition events. Nonetheless, by summing over all transitions entering a given state, one can also compute the expected number of occurrences of state events, therefore, counting transitions is not an alternative to counting states, but a generalization. Indeed, we provide examples of quantities that are easily determined using transition information but are cumbersome, at best, to compute with state events.

We describe a transition event with a matrix, which we call a {\em mask}, where the entries of the mask are the weights assigned to each transition of the Markov chain. For example, the mask for the expected time to absorption assigns a unit weight to each transition that leaves a transient state and zero to all other transitions. The main result of this paper is a closed-form, computable expression for the expected value of a transition event; see Theorem \ref{reducible-chains-theorem}.

In \S \ref{main-results-section}, we define the random variables associated with a mask and give an expression for the expectation of these random variables on reducible Markov chains. Next we examine the time-average of a mask, which yields both the long-term behavior of irreducible chains and the steady-state behavior of reducible chains. In \S \ref{examples-section}, we give examples of masks. In \S \ref{computation-section}, we address the numerical issues of conditioning, stability, and complexity. In \S \ref{simulations-section}, we compare expectations computed using our results to a Monte Carlo simulation as a verification of our methods.

\section{Main Results}\label{main-results-section}

In this section we develop a closed-form expression for the expectation of a transition event. After dispensing with the preliminaries, we treat the absorbing chain case, that is, where the ergodic classes are single states. Next, we generalize to reducible chains. Finally, we show how masks can be used to determine the steady-state behavior of reducible chains and the time-average of irreducible chains.

\subsection{Preliminaries}\label{preliminaries-subsection}

To avoid confusion with the transition matrix $T$ we denote the transpose of a matrix by $A^*$. Let $A \odot B$ denote the Hadamard product, that is $(A \odot B)_{i,j} = A_{i,j}B_{i,j}$.

\begin{theorem}[see \cite{horn-johnson}]\label{hadamard-conversion-theorem}
Let $x \in \R^n$ and $A, B \in \R^{m \times n}$ be given and let $D = \diag(x)$. Then $(ADB^*)_{i,i} = \left[(A \odot B)x\right]_i$. 
\end{theorem}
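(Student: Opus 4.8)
The plan is to prove the identity by a direct index computation of the diagonal entry $(ADB^*)_{i,i}$, exploiting the fact that $D = \diag(x)$ is diagonal so that the inner summation collapses to a single term. First I would expand the $(i,i)$ entry of the triple product using the definition of matrix multiplication as a double sum over intermediate indices,
\begin{equation*}
(ADB^*)_{i,i} = \sum_{k}\sum_{\ell} A_{i,k}\, D_{k,\ell}\, (B^*)_{\ell,i}.
\end{equation*}
Since $D_{k,\ell} = x_k \delta_{k,\ell}$ vanishes off the diagonal, exactly one of the two sums survives, leaving
\begin{equation*}
(ADB^*)_{i,i} = \sum_{k} A_{i,k}\, x_k\, (B^*)_{k,i}.
\end{equation*}

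Next I would rewrite the transpose entry via $(B^*)_{k,i} = B_{i,k}$, so that the surviving sum becomes $\sum_{k} A_{i,k} B_{i,k}\, x_k$. Finally I would recognize the product $A_{i,k} B_{i,k}$ as the $(i,k)$ entry of the Hadamard product, that is $(A \odot B)_{i,k}$, which identifies the sum with $\sum_k (A \odot B)_{i,k}\, x_k = \left[(A \odot B)x\right]_i$, completing the argument.

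The computation is entirely routine and presents no genuine obstacle. The only point requiring a moment's care is the bookkeeping on the transpose: it is precisely the appearance of $B_{i,k}$ (with the row index $i$ of $B$ paired with the row index $i$ of $A$) rather than an ordinary inner product along differing indices that causes the \emph{entrywise} Hadamard product to emerge in place of a standard matrix product. The diagonal structure of $D$ is what reduces the would-be double sum to the single, index-matched sum that the Hadamard product encodes.
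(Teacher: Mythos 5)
Your computation is correct and complete: the collapse of the double sum via $D_{k,\ell}=x_k\delta_{k,\ell}$, the transpose bookkeeping $(B^*)_{k,i}=B_{i,k}$, and the identification of $\sum_k A_{i,k}B_{i,k}x_k$ with $\left[(A\odot B)x\right]_i$ are exactly the standard argument. The paper itself offers no proof of this statement (it simply cites Horn and Johnson), and your direct index verification is the expected one.
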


In this paper we consider finite, stationary (temporally homogeneous) Markov chains, denoted $X_k$; see for example \cite{durrett}. Here, $\s{S} = \set{s_1, \ldots, s_n}$ is the state space. If $\mu \in \R^n$ is stochastic, that is $\mu_i \geq 0$ and $\|\mu\|_1 = 1$, then $P_\mu$ is the unique probability measure on $\Omega = \s{S} \times \s{S} \times \cdots$ satisfying $P_\mu(X_0 = s_i) = \mu_i$ and having transition probabilities associated with the Markov chain $X_k$. Furthermore, $E_\mu$ is expectation with respect to $P_\mu$. The column-stochastic matrix $T \in \R^{n \times n}$ with entries
\begin{equation}
T_{i,j} = P(X_{k+1} = s_i \mid X_k = s_j)
\end{equation}
is the transition matrix. The $k$-step transition probabilities are found in $T^k$. To summarize,
\begin{equation}
P_{\mu}(X_k = s_i) = \left[T^k\mu\right]_i.
\end{equation}

A mask is a matrix $M \in \R^{n \times n}$ that describes the weights assigned to the transitions of a Markov chain. Here $M_{i,j}$ is the weight assigned to the transition from $s_j$ to $s_i$. The transition event for $M$ is the random variable whose value on any realization is the sum of the mask entries,
\begin{equation}\label{random-variable}
Y_M = \sum_{k=0}^\infty M_{X_{k+1}, X_k}.
\end{equation}

\begin{lemma}\label{expectation-of-M-lemma}
Given $M \in \R^{n \times n}$,
\begin{equation} 
	E_\mu M_{X_{k+1}, X_k} = \sum_{i=1}^n \left[(M \odot T)T^k\mu\right]_i. 
\end{equation}
\end{lemma}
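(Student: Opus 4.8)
The plan is to compute the expectation $E_\mu M_{X_{k+1}, X_k}$ directly from the definition by conditioning on the pair of states $(X_k, X_{k+1})$, and then to recognize the resulting double sum as a diagonal-entry expression that Theorem~\ref{hadamard-conversion-theorem} converts into the Hadamard form on the right-hand side.

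\begin{proof}
First I would write the expectation as a sum over all possible values of the random transition. Since $M_{X_{k+1}, X_k}$ takes the value $M_{i,j}$ precisely on the event $\set{X_k = s_j, X_{k+1} = s_i}$, the expectation is
\begin{equation}\label{double-sum-step}
E_\mu M_{X_{k+1}, X_k} = \sum_{i=1}^n \sum_{j=1}^n M_{i,j}\, P_\mu(X_{k+1} = s_i, X_k = s_j).
\end{equation}
The next step is to factor the joint probability using the Markov property and temporal homogeneity: $P_\mu(X_{k+1} = s_i, X_k = s_j) = P(X_{k+1} = s_i \mid X_k = s_j)\, P_\mu(X_k = s_j) = T_{i,j}\,[T^k\mu]_j$. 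Substituting this into \eqref{double-sum-step} and using the definition of the Hadamard product gives
\begin{equation}
E_\mu M_{X_{k+1}, X_k} = \sum_{i=1}^n \sum_{j=1}^n (M \odot T)_{i,j}\, [T^k\mu]_j.
\end{equation}

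Now I would recognize the inner sum over $j$ as a matrix–vector product: summing $(M \odot T)_{i,j}$ against the vector $T^k\mu$ yields $[(M \odot T)(T^k\mu)]_i$. Pulling the sum over $i$ outside, this is exactly $\sum_{i=1}^n [(M \odot T) T^k \mu]_i$, which is the claimed right-hand side. Alternatively, one could invoke Theorem~\ref{hadamard-conversion-theorem} with $x = T^k\mu$, $D = \diag(x)$, $A = M$, and $B = T$, so that the diagonal entries $(MDT^*)_{i,i}$ equal $[(M \odot T)x]_i$; summing the diagonal recovers the trace interpretation, but the direct matrix–vector reading is cleaner here.

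The main obstacle, such as it is, lies in the justification of the factorization step rather than in any algebra: one must be careful that the conditioning uses the temporal homogeneity of the chain so that the one-step transition probability is exactly $T_{i,j}$ independent of $k$, and that $P_\mu(X_k = s_j) = [T^k\mu]_j$ as recorded in the preliminaries. Everything else is bookkeeping. I expect the proof to be short, with the only genuine content being the correct bookkeeping of indices so that the Hadamard product $M \odot T$ emerges naturally from pairing each weight $M_{i,j}$ with its transition probability $T_{i,j}$.
\end{proof}
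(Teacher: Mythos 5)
Your proposal is correct and follows essentially the same route as the paper: expand the expectation as a double sum over state pairs, factor the joint probability via the Markov property into $T_{i,j}[T^k\mu]_j$, and read the result as the components of $(M\odot T)T^k\mu$. The paper's proof is exactly this computation (labeled ``total probability''), so there is nothing to flag.
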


{\em Proof.} By total probability,
\begin{eqnarray*}
E_\mu M_{X_{k+1}, X_k} 
	&=& \sum_{i,j=1}^n M_{i,j}P_\mu(X_{k+1}=s_i, X_k = s_j) \\
    &=& \sum_{i,j=1}^n M_{i,j}P_\mu(X_{k+1}=s_i | X_k = s_j)P_\mu(X_k=s_j)\\
    &=& \sum_{i,j=1}^n M_{i,j}T_{i,j}\left[T^k\mu\right]_j = \sum_{i=1}^n \left[(M \odot T)T^k \mu\right]_i. \qquad \endproof
\end{eqnarray*}

\subsection{Cumulative Events on Absorbing Chains}
\label{absorbing-chains-subsection}

Let $\s{A} \subset \s{S}$ denote the absorbing states. That is, $s_j \in \s{A}$ if $P(X_{k+1} = s_j \mid X_{k} = s_j) = 1$, or equivalently, $T_{j,j} = 1$. A Markov chain $X_k$ is absorbing if $\s{A} \neq \emptyset$ and there exists $k \in \N$ such that
\begin{equation}
P(X_k \in \s{A} \mid X_0 = s_j) > 0,\qquad j = 1, \ldots, n.
\end{equation}
In other words, an absorbing chain is a reducible chain in which the ergodic classes are single states; see for example \cite{campbell-meyer, kemeny-snell, langville-meyer, meyer}. Without loss of generality, the transition matrix of an absorbing chain assumes the form
\begin{equation}\label{block-form-equation}
T = \begin{bmatrix} A_T & 0 \\ B_T & I\end{bmatrix},
\end{equation}
where $A_T \in \R^{t \times t}$ and $t = n - |\s{A}|$ is the number of transient states. Thus, $A_T$ and $B_T$ are the transitions leaving the $t$ transient states. In particular, the diagonal entries of $A_T$ are strictly less than 1. Furthermore,
\begin{equation}\label{block-form-powers-equation}
T^k = \begin{bmatrix}
    A_T^k & 0 \\
    B_T\sum_{m=0}^{k-1} A_T^m & I
    \end{bmatrix}.
\end{equation}

\begin{lemma}[see \cite{meyer}]\label{bounded-spectral-radius-lemma}
If $T$ is the transition matrix of an absorbing Markov chain then the spectral radius of $A_T$ satisfies $\rho(A_T) < 1$. Moreover, $(I - A_T)^{-1}$ exists and
\begin{equation}
(I - A_T)^{-1} = \sum_{k = 0}^\infty A_T^k.
\end{equation}
\end{lemma}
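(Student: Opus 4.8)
The plan is to extract from the absorbing hypothesis a single quantitative fact --- that some power of $A_T$ has $\ell^1$-operator norm strictly below one --- and then read off both conclusions from it. By the block-power formula \eqref{block-form-powers-equation}, the upper-left block of $T^k$ is $A_T^k$, so for a transient state $s_j$ the probability of still being transient after $k$ steps is the $j$th column sum of $A_T^k$:
$$\sum_{i=1}^t \left[A_T^k\right]_{i,j} = 1 - P(X_k \in \s{A} \mid X_0 = s_j).$$

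The definition of an absorbing chain supplies a single $N \in \N$ with $P(X_N \in \s{A} \mid X_0 = s_j) > 0$ for every $j$, hence in particular for every transient $j$. Substituting into the identity above, each of the $t$ columns of $A_T^N$ sums to strictly less than one, so their maximum $\alpha := \|A_T^N\|_1$ satisfies $\alpha < 1$. This is the one genuinely substantive step, and the point requiring care is that the bound holds for the power $A_T^N$ rather than for $A_T$ itself: a transient state from which the chain cannot escape in a single step contributes a column of $A_T$ summing to exactly $1$, so passing to a sufficiently high power is essential, and it is also why the argument below must route through $\rho(A_T^N) = \rho(A_T)^N$ rather than attempt to bound $\rho(A_T)$ from $\|A_T\|_1$ directly.

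The remaining conclusions are now standard. Since the $\ell^1$-operator norm is submultiplicative and dominates the spectral radius, $\rho(A_T)^N = \rho(A_T^N) \le \|A_T^N\|_1 = \alpha$, whence $\rho(A_T) \le \alpha^{1/N} < 1$; in particular $1$ is not an eigenvalue of $A_T$ and $I - A_T$ is nonsingular. For the series representation I would telescope, writing $(I - A_T)\sum_{k=0}^{K} A_T^k = I - A_T^{K+1}$, and let $K \to \infty$: because $\rho(A_T) < 1$ we have $A_T^{K+1} \to 0$ (equivalently $\|A_T^{mN}\|_1 \le \alpha^m \to 0$), so the partial sums converge and $(I - A_T)\sum_{k=0}^\infty A_T^k = I$, with the identical computation on the opposite side furnishing the two-sided inverse.
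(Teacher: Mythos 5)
Your proof is correct, and it is essentially the standard argument: the paper itself gives no proof of this lemma, deferring to the cited reference \cite{meyer}, where the result is established by exactly this route (reachability of $\s{A}$ forces some power $A_T^N$ to have all column sums strictly below one, whence $\rho(A_T) \le \|A_T^N\|_1^{1/N} < 1$ and the Neumann series follows by telescoping). Your remark that one must pass to the power $A_T^N$ rather than bound $\|A_T\|_1$ directly is exactly the right point of care, and your use of a single $N$ uniform in $j$ is licensed verbatim by the paper's definition of an absorbing chain.
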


\begin{lemma} \label{convergence-of-M-dot-T-times-Tk-lemma}
Let $M, T \in \R^{n \times n}$ be given, where $T$ is the transition matrix of an absorbing Markov chain. If $M_{j,j} = 0$ whenever $s_j \in \s{A}$ then
\begin{equation}
\sum_{k=0}^\infty (M \odot T)T^k = (M \odot T)\begin{bmatrix}(I - A_T)^{-1} & 0 \\ 0 & 0\end{bmatrix}.
\end{equation}
\end{lemma}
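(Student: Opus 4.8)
The plan is to exploit the block structure of $T$ in \eqref{block-form-equation} together with the diagonal hypothesis on $M$ to collapse the product $(M \odot T)T^k$ into something involving only the transient block $A_T^k$, and then to sum using the Neumann series from Lemma \ref{bounded-spectral-radius-lemma}. Partition $M$ conformally with $T$ as a $2 \times 2$ block matrix, the index sets corresponding to the $t$ transient states and the $|\s{A}|$ absorbing states. The key observation is that, because the upper-right block of $T$ is zero and its lower-right block is the identity, the Hadamard product $M \odot T$ inherits the same upper-right zero block, while its lower-right block is the diagonal matrix formed from the absorbing diagonal entries $M_{j,j}$. The hypothesis $M_{j,j} = 0$ for $s_j \in \s{A}$ forces this lower-right block to vanish as well. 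Hence the entire right block-column of $M \odot T$ is zero; this is the crux of the argument.

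With this in hand, I would compute $(M \odot T)T^k$ directly from the block form of $T^k$ in \eqref{block-form-powers-equation}. Since the right block-column of $M \odot T$ is zero, the multiplication by $T^k$ never touches the lower blocks $B_T\sum_{m=0}^{k-1} A_T^m$ or $I$ of $T^k$; only the upper-left block $A_T^k$ survives. A short block computation then gives
\begin{equation*}
(M \odot T)T^k = (M \odot T)\begin{bmatrix} A_T^k & 0 \\ 0 & 0 \end{bmatrix}.
\end{equation*}
Summing over $k$ and pulling the fixed matrix $M \odot T$ outside the sum yields
\begin{equation*}
\sum_{k=0}^\infty (M \odot T)T^k = (M \odot T)\begin{bmatrix} \sum_{k=0}^\infty A_T^k & 0 \\ 0 & 0 \end{bmatrix},
\end{equation*}
and Lemma \ref{bounded-spectral-radius-lemma} identifies $\sum_{k=0}^\infty A_T^k$ with $(I - A_T)^{-1}$, giving the claim.

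The only point requiring care is the convergence of the infinite sum and its interchange with left-multiplication. Since $\rho(A_T) < 1$ by Lemma \ref{bounded-spectral-radius-lemma}, the powers $A_T^k$ decay geometrically, so the partial sums $\sum_{k=0}^{K} A_T^k$ converge to $(I - A_T)^{-1}$; left-multiplication by the fixed matrix $M \odot T$ is continuous and therefore commutes with the limit. I do not expect a genuine obstacle here: once the right block-column of $M \odot T$ is seen to vanish, the remainder is routine block manipulation, and the real content of the lemma is packaged in the diagonal hypothesis on $M$ and in the already-established Neumann series.
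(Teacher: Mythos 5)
Your proposal is correct and follows essentially the same route as the paper: both arguments show that the hypothesis $M_{j,j}=0$ on absorbing states forces the entire right block-column of $M \odot T$ to vanish, use the block form \eqref{block-form-powers-equation} of $T^k$ to reduce $(M \odot T)T^k$ to $(M \odot T)\begin{bmatrix} A_T^k & 0 \\ 0 & 0 \end{bmatrix}$, and then sum via the Neumann series of Lemma \ref{bounded-spectral-radius-lemma}. Your explicit remark on the continuity of left-multiplication when interchanging the sum is a minor addition the paper leaves implicit; otherwise the two proofs coincide.
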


{\em Proof.} If $s_j \in \s{A}$ then $(M \odot T)_{i,j} = 0$ for $i=1,\ldots,n$. Using the block form (\ref{block-form-equation}) for $M$,
\[ M \odot T = \begin{bmatrix} A_M \odot A_T & 0 \\ B_M \odot B_T & 0 \end{bmatrix}. \]
Combining this with \eqref{block-form-powers-equation},
\[ 
(M \odot T)T^k = \begin{bmatrix}(A_M \odot A_T)A_T^k & 0 \\ (B_M \odot
        B_T)A_T^k & 0\end{bmatrix} = (M \odot T)\begin{bmatrix}A_T^k & 0 \\ 0 & 0 \end{bmatrix}.
\]
Hence,
\begin{align*}
\sum_{k=0}^\infty (M \odot T)T^k
    &= (M\odot T)\sum_{k=0}^\infty\begin{bmatrix}A_T^k & 0
        \\ 0 & 0\end{bmatrix}
    = (M\odot T)\begin{bmatrix}(I-A_T)^{-1}&0\\0&0\end{bmatrix}.
    \qquad \endproof
\end{align*}

Throughout the paper, let
\begin{equation}\label{Q-equation}
Q = \begin{bmatrix}I - A_T & 0 \\ 0 & 0 \end{bmatrix},\quad \mbox{ and } \quad Q^- = \begin{bmatrix}(I - A_T)^{-1} & 0 \\ 0 & 0 \end{bmatrix}.
\end{equation}
Note that $Q^-$ satisfies $(I - T)Q^-(I - T) = (I - T)$ and $Q^-(I - T)Q^- = Q^-$ so that $Q^-$ is a (1,2)-inverse of $I-T$; see for example \cite{campbell-meyer}. However, it is not always the case that $((I-T)Q^-)^* = (I-T)Q^-$ or that $(Q^-(I-T))^* = Q^-(I-T)$. Hence, $Q^-$ is neither the Moore-Penrose inverse, nor is it the Drazin inverse of $I-T$ since $I-T$ and $Q^-$ do not necessarily commute. However, it is straightforward to show that $Q^-$ is both the Moore-Penrose inverse and the Drazin inverse of $Q$.

\begin{theorem}\label{absorbing-chains-theorem}
Let $M,T \in \R^{n \times n}$ and $\mu \in \R^n$ be given, where $T$ is the transition matrix of an absorbing Markov chain and $\mu$ is stochastic. Set $D = \diag(Q^-\mu)$. If $M_{j,j} = 0$ for all $s_j \in \s{A}$ then the random variable \eqref{random-variable} has expectation
\begin{equation}\label{absorbing-expectation}
E_{\mu}Y_M = \tr(MDT^*).
\end{equation}
\end{theorem}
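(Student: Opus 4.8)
The plan is to push the expectation through the infinite sum defining $Y_M$, apply the per-step formula from Lemma \ref{expectation-of-M-lemma}, sum the resulting matrix series in closed form via Lemma \ref{convergence-of-M-dot-T-times-Tk-lemma}, and finally recognize the sum of vector entries as a trace using Theorem \ref{hadamard-conversion-theorem}.

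First I would write, by linearity together with an interchange of expectation and the infinite sum,
\[ E_\mu Y_M = E_\mu \sum_{k=0}^\infty M_{X_{k+1},X_k} = \sum_{k=0}^\infty E_\mu M_{X_{k+1},X_k}, \]
and then invoke Lemma \ref{expectation-of-M-lemma} to obtain
\[ E_\mu Y_M = \sum_{k=0}^\infty \sum_{i=1}^n \bigl[(M\odot T)T^k\mu\bigr]_i = \sum_{i=1}^n \Bigl[\sum_{k=0}^\infty (M\odot T)T^k\mu\Bigr]_i, \]
where the finite inner sum over $i$ and the series over $k$ may be swapped freely. Since $M_{j,j}=0$ for every $s_j\in\s{A}$, Lemma \ref{convergence-of-M-dot-T-times-Tk-lemma} evaluates the series as $(M\odot T)Q^-$, which gives $E_\mu Y_M = \sum_{i=1}^n [(M\odot T)Q^-\mu]_i$.

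The final step converts this sum of entries into a trace. Setting $x = Q^-\mu$, so that $D = \diag(x)$ as in the statement, I would apply Theorem \ref{hadamard-conversion-theorem} with $A = M$ and $B = T$ to get $(MDT^*)_{i,i} = [(M\odot T)x]_i = [(M\odot T)Q^-\mu]_i$ for each $i$. Summing over $i$ then yields
\[ E_\mu Y_M = \sum_{i=1}^n (MDT^*)_{i,i} = \tr(MDT^*), \]
which is the claimed identity, so the bulk of the argument is an assembly of results already in hand.

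The one point requiring genuine care is the interchange of $E_\mu$ with the infinite sum in the first step, since $M$ may carry entries of either sign and the summands are not monotone. I would justify it by absolute convergence: applying the same chain of computations to the entrywise absolute value $|M|$ gives $E_\mu Y_{|M|} = \sum_i [(|M|\odot T)Q^-\mu]_i$, and this quantity is finite because $\rho(A_T)<1$ by Lemma \ref{bounded-spectral-radius-lemma}, so the matrix series $\sum_k (|M|\odot T)T^k$ converges geometrically. With $E_\mu Y_{|M|}<\infty$ established, dominated convergence (equivalently, splitting $M$ into its positive and negative parts and treating each by monotone convergence) legitimizes the interchange, after which the remaining manipulations are routine linear algebra.
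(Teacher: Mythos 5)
Your proposal is correct and follows essentially the same route as the paper's proof: interchange expectation and summation, apply Lemma \ref{expectation-of-M-lemma}, sum the series via Lemma \ref{convergence-of-M-dot-T-times-Tk-lemma}, and convert to a trace via Theorem \ref{hadamard-conversion-theorem}, with the interchange justified by monotone convergence in the nonnegative case and dominated convergence (dominating by $Y_{|M|}$, the paper's $Z$) in general. The only difference is presentational---the paper treats the nonnegative case first and then extends, while you compute first and justify afterward---so no substantive comparison is needed.
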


\begin{proof}
Suppose that $M_{i,j} \geq 0$ for all $i,j$ so that $Y_M$ is an increasing series. Then by the Monotone Convergence Theorem, see \cite{durrett}, we may exchange the order of summation and expectation,
\[
E_{\mu}Y_M = \sum_{k=0}^\infty E_{\mu}M_{X_{k+1}, X_k}.
\]
Applying Lemma \ref{expectation-of-M-lemma}, 
\[ 
E_{\mu}Y_M = \sum_{k=0}^\infty \sum_{i=1}^n \left[(M \odot T)T^k\mu\right]_i 
	= \sum_{i=1}^n \left[\sum_{k=0}^\infty (M \odot T)T^k\mu\right]_i.
\]
By Lemma \ref{convergence-of-M-dot-T-times-Tk-lemma} and Theorem \ref{hadamard-conversion-theorem},
\[
E_{\mu}Y_M = \sum_{i=1}^n \left[(M\odot T)Q^-\mu\right]_i = \tr(MDT^*).
\]

For general $M$, let $Z$ be the random variable $Z = \sum_{k=0}^\infty |M_{X_{k+1}, X_k}|.$ For all $m \in \N$,
\[
\left|\sum_{k=0}^m M_{X_{k+1}, X_k} \right| \leq \sum_{k=0}^\infty |M_{X_{k+1}, X_k}| = Z.
\]
The nonnegative case indicates that $E_\mu |Z| = E_\mu Z < \infty$ so that the Dominated Convergence Theorem allows us to exchange the order of summation with expectation. The remainder of the argument is identical to the nonnegative case.\qquad
\end{proof}

{\em Remark.} Theorem \ref{absorbing-chains-theorem} indicates that the condition $M_{j,j} = 0$ for $s_j \in \s{A}$ is sufficient to guarantee that $E_\mu |Y_M| < \infty$. This condition is practically necessary; if $s_j \in \s{A}$ satisfies $P_\mu(X_k = s_j) > 0$ for some $k \in \N$ then $M_{j,j} \neq 0$ implies that $E_\mu |Y_M| = \infty$. Thus, $M_{j,j} = 0$ is required of all absorbing states that are ``reachable.''

\subsection{Cumulative Events on Reducible Markov Chains}
\label{cumulative-events-on-reducible-chains-subsection}

We now generalize to a reducible Markov chain; see for example \cite{langville-meyer, meyer, meyer-siam-review}. We assume that the transition matrix $T$ is in canonical form
\begin{equation}\label{reducible-canonical-form}
T = \left[\begin{array}{cccc|cccc}
        T_{11} & 0 & \dots & 0 & 0 & 0 & \dots & 0 \\
        T_{21} & T_{22} & \dots & 0 & 0 & 0 & \dots & 0 \\
        \vdots & \vdots & \ddots & \vdots & \vdots & \vdots & \ddots & \vdots \\
        T_{r1} & T_{r2} & \dots & T_{rr} & 0 & 0 & \dots & 0 \\
        \hline
        T_{r+1,1} & T_{r+1,2} & \dots & T_{r+1,r} & T_{r+1,r+1}&0&\dots&0\\
        T_{r+2,1} & T_{r+2,2}& \dots & T_{r+2,r} & 0 & T_{r+2,r+2} &&0\\
        \vdots & \vdots & \ddots & \vdots & \vdots & \vdots & \ddots & \vdots  \\
        T_{m1} & T_{m2} & \dots & T_{mr} & 0 & 0 & \dots & T_{mm}
    \end{array}\right].
\end{equation}
The blocks $T_{11}$ through $T_{rr}$ are the transient classes and the blocks $T_{r+1,r+1}$ through $T_{mm}$ are the ergodic classes. Here, $\rho(T_{ii}) < 1$ for $i \leq r$. The ergodic classes of a reducible chain generalize the notion of an absorbing state to a collection of states. We generalize the block form (\ref{block-form-equation}) for $T$ to
\begin{equation}
T = \begin{bmatrix} A_T & 0 \\ B_T & E_T\end{bmatrix},
\end{equation}
where $A_T$ and $B_T$ correspond to the transient states and $E_T$ is block diagonal containing the ergodic classes. Let $\s{E}$ denote the set of ergodic states.

\begin{theorem}\label{reducible-chains-theorem}
Set $D = \diag(Q^-\mu)$. For a reducible Markov chain $T$, if $M_{i,j} = 0$ whenever $s_i, s_j \in \s{E}$, then the random variable \eqref{random-variable} has expectation
\begin{equation}\label{reducible-expectation}
E_\mu Y_M =  \tr(MDT^*).
\end{equation}
\end{theorem}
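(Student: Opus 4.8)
The plan is to follow the absorbing-chain proof of Theorem \ref{absorbing-chains-theorem} essentially verbatim, the only structural change being that the identity block in \eqref{block-form-equation} is now the block-diagonal matrix $E_T$ of ergodic classes. First I would partition $M$ conformally with $T$ as
\[
M = \begin{bmatrix} A_M & C_M \\ B_M & D_M \end{bmatrix},
\]
so that the lower-right block $D_M$ collects exactly the weights $M_{i,j}$ with $s_i, s_j \in \s{E}$; the hypothesis is precisely $D_M = 0$. Because the upper-right (ergodic-to-transient) block of $T$ vanishes, the Hadamard product then has the shape
\[
M \odot T = \begin{bmatrix} A_M \odot A_T & 0 \\ B_M \odot B_T & D_M \odot E_T \end{bmatrix} = \begin{bmatrix} A_M \odot A_T & 0 \\ B_M \odot B_T & 0 \end{bmatrix},
\]
i.e.\ every column of $M \odot T$ indexed by an ergodic state is zero.

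The heart of the argument, generalizing Lemma \ref{convergence-of-M-dot-T-times-Tk-lemma}, is to exploit these zero columns. Writing $T^k = \begin{bmatrix} A_T^k & 0 \\ \ast & E_T^k \end{bmatrix}$, the zero ergodic columns of $M \odot T$ annihilate the whole bottom block-row of $T^k$, so that
\[
(M \odot T) T^k = \begin{bmatrix} (A_M \odot A_T) A_T^k & 0 \\ (B_M \odot B_T) A_T^k & 0 \end{bmatrix} = (M \odot T) \begin{bmatrix} A_T^k & 0 \\ 0 & 0 \end{bmatrix}.
\]
This is the key point: the condition $D_M = 0$ is exactly what keeps the non-decaying powers $E_T^k$ (each ergodic class being stochastic) out of the computation. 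In the absorbing case this happened automatically because $E_T = I$ was diagonal and only $M_{j,j} = 0$ was needed; with genuine intra-class transitions the entire block $D_M$ must vanish.

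Summing then proceeds as before. Since $A_T$ is block lower triangular with diagonal blocks $T_{11}, \dots, T_{rr}$, each of spectral radius less than one, $\rho(A_T) < 1$ and the Neumann series gives $\sum_{k=0}^\infty A_T^k = (I - A_T)^{-1}$ as in Lemma \ref{bounded-spectral-radius-lemma}. Hence $\sum_{k=0}^\infty (M \odot T) T^k = (M \odot T) Q^-$, the exact conclusion of Lemma \ref{convergence-of-M-dot-T-times-Tk-lemma}, and from here the argument is identical to Theorem \ref{absorbing-chains-theorem}: monotone convergence (for $M \geq 0$), then dominated convergence (in general), lets me interchange summation and expectation in \eqref{random-variable}; Lemma \ref{expectation-of-M-lemma} converts each term; and Theorem \ref{hadamard-conversion-theorem} rewrites $\sum_i [(M \odot T) Q^- \mu]_i$ as $\tr(MDT^*)$ with $D = \diag(Q^-\mu)$. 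I expect the only real obstacle to be the analytic bookkeeping behind the interchange, namely verifying $E_\mu |Y_M| < \infty$; this is where $D_M = 0$ does its work, preventing the infinitely many within-class transitions from contributing to $Y_M$, just as foreshadowed in the remark after Theorem \ref{absorbing-chains-theorem}.
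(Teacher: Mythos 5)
Your proof is correct and takes essentially the same approach as the paper: the paper's own (much terser) proof simply observes that $\rho(A_T) < 1$ follows from $\rho(T_{ii}) < 1$ for the transient classes, that the hypothesis on $M$ guarantees the conclusion of Lemma \ref{convergence-of-M-dot-T-times-Tk-lemma}, and that the rest is identical to Theorem \ref{absorbing-chains-theorem}. Your block computation of $M \odot T$ and $(M \odot T)T^k$ is precisely the detail the paper leaves implicit, worked out correctly, including the key observation that $D_M = 0$ is what keeps the non-decaying powers $E_T^k$ out of the sum.
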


\begin{proof}
Since $\rho(T_{ii}) < 1$ for all the transient classes it follows that $\rho(A_T) < 1$ as in Lemma \ref{bounded-spectral-radius-lemma}. The condition $M_{i,j} = 0$ for $s_j \in \s{E}$ guarantees the result of Lemma \ref{convergence-of-M-dot-T-times-Tk-lemma}. With these results, the remainder of the proof is identical to the proof of Theorem \ref{absorbing-chains-theorem}.\qquad
\end{proof}

\subsection{Time-Average Events}
\label{time-average-events-subsection}

Masks may be used for a general Markov chain, although the random variable in \eqref{random-variable} may not converge. However, the limit 
\begin{equation}\label{cesaro-sums-equation}
G = \lim_{N \to \infty} \frac{1}{N}\sum_{k=0}^N T^k
\end{equation}
does exist. Set $S = I - T$ and let $G = I - SS^\#$, where $S^\#$ is the group generalized inverse, or Drazin inverse, of $S$; see for example \cite{campbell-meyer, langville-meyer, meyer-siam-review}. Then $G$ is the projector onto the null space of $S$ along the range of $S$.

\begin{theorem}\label{time-average-theorem}
Set $D = \diag(G\mu)$. Then for any stochastic $T$, the random variable
\begin{equation}\label{time-average-random-variable}
Y_M = \lim_{N \to \infty} \frac{1}{N} \sum_{k=0}^N M_{X_{k+1}, X_k}
\end{equation}
has expectation
\begin{equation}\label{time-average-expectation}
E_{\mu}Y_M = \tr(MDT^*).
\end{equation}
\end{theorem}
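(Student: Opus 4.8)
The plan is to follow the template of Theorem~\ref{absorbing-chains-theorem}, but to replace its series-convergence step with a bounded-convergence argument adapted to the Ces\`aro average. A different analytic tool is needed here precisely because, as noted at the start of this subsection, the cumulative variable \eqref{random-variable} may diverge, so there is no integrable dominating function for the partial sums $\sum_{k=0}^N M_{X_{k+1},X_k}$ themselves. The normalization by $1/N$ is what rescues the argument. Write $f_N = \frac{1}{N}\sum_{k=0}^N M_{X_{k+1},X_k}$ and $C = \max_{i,j}|M_{i,j}|$.

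First I would note that $|M_{X_{k+1},X_k}| \le C$ for every $k$, so each average satisfies $|f_N| \le C$ uniformly in $N$. Because the state space is finite, the ergodic theorem for finite Markov chains guarantees that $f_N$ converges almost surely to the random variable $Y_M$ defined in \eqref{time-average-random-variable} (even for reducible $T$, since almost every realization is eventually confined to a single ergodic class on which the time average stabilizes). Having an almost-sure limit together with the uniform bound $C$, I would invoke the Bounded Convergence Theorem, a special case of the Dominated Convergence Theorem~\cite{durrett}, to exchange expectation with the limit, obtaining $E_\mu Y_M = \lim_{N\to\infty} E_\mu f_N$.

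The remaining computation is pure bookkeeping. By linearity of expectation and Lemma~\ref{expectation-of-M-lemma}, each term is
\[
E_\mu f_N = \frac{1}{N}\sum_{k=0}^N \sum_{i=1}^n \left[(M\odot T)T^k\mu\right]_i = \sum_{i=1}^n \left[(M\odot T)\paren{\frac{1}{N}\sum_{k=0}^N T^k}\mu\right]_i,
\]
where the fixed matrix $M\odot T$ and the fixed vector $\mu$ have been pulled outside the average. Letting $N\to\infty$ and using the definition \eqref{cesaro-sums-equation} of $G$, the inner average converges to $G$, so $E_\mu Y_M = \sum_{i=1}^n \left[(M\odot T)G\mu\right]_i$. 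Applying Theorem~\ref{hadamard-conversion-theorem} with $x = G\mu$ and $D = \diag(G\mu)$ rewrites this sum of diagonal entries as $\tr(MDT^*)$, which is the claim.

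The step I expect to be the crux is licensing the interchange of limit and expectation, and in particular establishing the almost-sure existence of $Y_M$. Unlike the earlier theorems, no structural hypothesis on $M$ is imposed and the cumulative sum genuinely may fail to converge; everything hinges on the Ces\`aro averaging keeping the $f_N$ uniformly bounded and on the ergodic theorem supplying the pointwise limit. By contrast, the exchange of expectation with the \emph{finite} inner sum is mere linearity and requires no convergence theorem, so the only genuinely analytic input is that single bounded-convergence step; after it, the matrix limit $G$ simply plays the role that $Q^-$ played in Theorem~\ref{absorbing-chains-theorem}.
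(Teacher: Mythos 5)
Your proof follows essentially the same route as the paper's: a uniform bound on the Ces\`aro averages, the Dominated (Bounded) Convergence Theorem to interchange limit and expectation, then linearity with Lemma \ref{expectation-of-M-lemma}, the definition \eqref{cesaro-sums-equation} of $G$, and Theorem \ref{hadamard-conversion-theorem} to arrive at $\tr(MDT^*)$. The only addition is that you explicitly justify the almost-sure existence of the limit $Y_M$ via the ergodic theorem for finite chains, a point the paper's proof leaves implicit when invoking dominated convergence.
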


{\em Proof.} Let $\gamma = \max\setp{|M_{i,j}|}{1 \leq i,j \leq n}$.
Then for all $N \in \N$,
\[
\frac{1}{N} \sum_{k=0}^N M_{X_{k+1}, X_k} \leq 2\gamma
\]
so that we may apply the Dominated Convergence Theorem. This and the linearity of expectation give
\begin{align*}
E_\mu Y_m
    &= \lim_{N \to \infty} \frac{1}{N} \sum_{k=0}^N E_\mu
        M_{X_{k+1}, X_k} \\
    &= \sum_{i=1}^n \left[(M \odot T)\left(\lim_{N \to \infty} \frac{1}{N} \sum_{k=0}^N
        T^k\right)\mu\right]_i \\
    &= \sum_{i=1}^n \left[(M \odot T)G\mu\right]_i = \tr(MDT^*).\qquad \endproof
\end{align*}

{\em Remark.}
If $T$ is reducible, the value of $M$ on the transitions leaving transient states is irrelevant; the value of $Y_M$ on any realization depends only on the ergodic class that is entered. Thus, $Y_M$ represents the steady-state behavior of $T$ in this case. For example, in the case of an absorbing chain
\begin{equation}
E_\mu Y_M = \sum_{s_j \in \s{A}} P_\mu(X_k \to s_j)M_{j,j}.
\end{equation}
If we fix $s_j \in \s{A}$ and set $M_{j,j} = 1$ with all other entries zero, then $E_\mu Y_M$ is the probability of absorption into $s_j$ given the initial distribution $\mu$.

\section{Examples}
\label{examples-section} In this section we present examples of masks for determining some of the canonical quantities for reducible chains given by Meyer \cite{meyer-siam-review}. Then we give some novel examples that show the flexibility of transition events.

\subsection{Canonical Examples}
Meyer showed that $S^\#$ and $I - SS^\#$ contain the following values for absorbing chains:
\begin{romannum}

\item For $s_i \in \s{A}, \: (I - SS^\#)_{i,j}$ is the probability of being absorbed into state $s_i$ when initially in state $s_j$.

\item If $s_i, s_j \notin \s{A}$ then $S^\#_{i,j}$ is the expected number of times the chain will be in state $s_i$ when initially in state $s_j$.

\item The expected number of steps until absorption when initially in state $s_j \notin \s{A}$ is $\sum_{s_i \notin \s{A}} S^\#_{i,j}.$

\end{romannum}

For general reducible chains, Meyer suggests representing the ergodic class by a single state and using the above results to determine the same quantities. We can express these quantities in terms of transition events. Furthermore, we may do so on any reducible chain without having to convert to an absorbing representation. For any ergodic class $\s{E}_m$, let
\begin{equation}
M_{i,j} =   \begin{cases}
                1 & s_i \in \s{E}_m, \: s_j \notin \s{E}, \\
                0 & \mbox{otherwise}.
            \end{cases}
\end{equation}
Then $Y_M$ is 1 on any realization which enters $\s{E}_m$ and zero elsewhere. Thus, $E_\mu Y_M$ is the probability of absorption into $\s{E}_m$ which gives (i) for any reducible chain.

For (ii), given $s_h \notin \s{E}$, let
\begin{equation}
M_{i,j} =   \begin{cases}
                1 & i = h, \: s_j \notin \s{E}, \\
                0 & \mbox{otherwise}.
            \end{cases}
\end{equation}
Then $E_\mu Y_M$ is the expected number of arrivals at state $s_h$ given the initial distribution $\mu$. Setting $M_{i,j} = 1$ when $j = h$ instead of $i = h$ gives the expected number of departures from state $s_h$. These quantities may differ depending on the initial distribution.

To find (iii) let
\begin{equation}
M_{i,j} =   \begin{cases}
                1 & s_j \notin \s{E},   \\
                0 & \mbox{otherwise}.
            \end{cases}
\end{equation}
Then $E_\mu Y_M$ is the expected number of steps until absorption into some ergodic class.

In the next two examples, the desired quantities correspond directly with transitions, not states. Therefore, using transition events is more natural than attempting to reproduce the results using state information.

\subsection{Expected Path Lengths}
\label{expected-path-lengths-subsection}

Consider an object that moves between $n$ states with transition probabilities $T_{i,j}$ and suppose that $s_n$ is absorbing. Let $d(s_j, s_i)$ be the distance between $s_j$ and $s_i$ and set
\begin{equation}
M_{i,j} = \begin{cases} 0 & j = n \\ d(s_j,s_i) & \mbox{otherwise}.\end{cases}
\end{equation}
The random variable \eqref{random-variable} describes the distance traveled on any realization. If the initial position of the object has distribution $\mu$ then Theorem \ref{absorbing-chains-theorem} indicates that the expected distance traveled is given by \eqref{absorbing-expectation}.

\subsection{Application From Physics}
\label{application-from-physics-subsection}

Consider a hydrogen atom that is excited by an external energy source so that the atom's electron is perpetually changing energy states. Let $\{s_1, \ldots,  s_n\}$ be the various allowable energy levels and $T_{i,j}$ be the probability that the atom's electron moves from $s_j$ to $s_i$. Also, let $\mu$ be the distribution on the electron's initial position. To determine the portion of light emitted by the hydrogen atom that is in a particular range, say the visible light range, we set $M_{i,j} = 1$ for any transition that emits visible light and $M_{i,j} = 0$ otherwise. Then the portion of light that is visible in any realization is the time-average random variable given by \eqref{time-average-random-variable}. Applying Theorem \ref{time-average-theorem}, the expected portion of visible light is given by \eqref{time-average-expectation}.

\subsection{Composite Markov Chains}
\label{composite-markov-chains-subsection}

Suppose $T_1 \in \R^{n_1 \times n_1}$ and $T_2 \in \R^{n_2 \times n_2}$ are stochastic matrices. Let $T = T_1 \otimes T_2 \in \R^{n_1n_2 \times n_1n_2}$ be the Kronecker product of $T_1$ and $T_2$; see for example \cite{graham, horn-johnson, laub}. For simplicity, we label the entries of $T$ by $T_{(i_1, i_2),(j_1, j_2)}$ which represents the $i_2,j_2$ entry of the $i_1, j_1$ block of $T$ and is equal to $T_{(i_1, i_2), (j_1, j_2)} = [T_1]_{i_1, j_1}[T_2]_{i_2, j_2}$. It is straightforward to check that $T$ is also stochastic. Indeed, if $X_k$ is the Markov chain of $T_1$ with states $\{s_1, \ldots, s_{n_1}\}$ and $Y_k$ is the Markov chain of $T_2$ with states $\{t_1, \ldots, t_{n_2}\}$ then
\begin{equation}
T_{(i_1, i_2), (j_1, j_2)} = P(X_{k+1} = s_{i_1}, Y_{k+1} = t_{i_2} \mid X_k = s_{j_1}, Y_k = t_{j_2}).
\end{equation}
Similarly, given stochastic $\mu_1 \in \R^{n_1}$ and $\mu_2 \in \R^{n_2}$, the vector $\mu = \mu_1 \otimes \mu_2 \in \R^{n_1n_2}$ is stochastic and the same indexing scheme applies:
\begin{equation}
P_\mu(X_0 = s_{i_1}, Y_0 = t_{i_2}) = \mu_{(i_1, i_2)}.
\end{equation}
This generalizes in the obvious way for any finite number of transition matrices. 

Suppose $T_0$ is a competitive system and the states are ordered such that higher indices represent being closer to winning. Then the $p$-wise Kronecker product $T = T_0 \otimes \dots \otimes T_0$ represents competition between $p$ players taking turns. It is natural to ask what the expected number of lead changes is.

For clarity, let $p = 2$. We count a lead change if a player comes from behind and ends in the lead. If a tie is either created or broken on a turn, we count a half a lead change. The mask for two-player lead changes is given by
\begin{equation}
M_{(i_1,i_2),(j_1,j_2)} =
\begin{cases}
    0 & s_{j_1} \in \s{A} \mbox{ or } s_{j_2} \in \s{A} \\
    1 & j_2 < j_1 \mbox{ and } i_2 > i_1 \\
    1 & j_2 > j_1 \mbox{ and } i_2 < i_1 \\
    1/2 & j_2 = j_1 \mbox{ and } i_2 \neq i_1 \\
    1/2 & j_2 \neq j_1 \mbox{ and } i_2 = i_1 \\
    0 & \mbox{otherwise}.
\end{cases}
\end{equation}
When $s_{j_1} \in \s{A}$ the $(i_1, j_1)$ block is zero. For $s_{j_1} \notin \s{A}$ the $(i_1, j_1)$ block is
\begin{equation}
M_{(i_1, \cdot),(j_1, \cdot)} =
\begin{bmatrix}
0       & \ldots & 0    & 1/2   & 1     & \ldots & 1      & \\
\vdots  &        &\vdots&\vdots &\vdots &        & \vdots & \\
0       & \ldots & 0    & 1/2   & 1     & \ldots & 1      & \vdots \\
1/2     & \ldots & 1/2  & 0     & 1/2   & \ldots & 1/2    & 0 \\
1       & \ldots & 1    & 1/2   & 0     & \ldots & 0      & \vdots \\
\vdots  &        &\vdots&\vdots &\vdots &        & \vdots & \\
1       & \ldots & 1    & 1/2   & 0     & \ldots & 0      &
\end{bmatrix}.
\end{equation}

When $p > 2$, there are at least two natural ways to define a lead change. The first is to count a lead change whenever the player in the lead is passed by another. We count a half lead change for breaking or establishing a tie in the leading position. The second way to extend lead changes for $p > 2$ is to count the permutations in the players positions. For example, if $j_1 > j_2 > \dots > j_p$ and $i_1 < i_2 < \dots < i_p$, then this complete lead change gets a weight of $M_{(i_1, \ldots, i_p),(j_1, \ldots, j_p)} = 1 + \dots + p = p(p+1)/2$.

\section{Computation}
\label{computation-section}

In this section we discuss the conditioning of \eqref{reducible-expectation}. We then provide an algorithm and analyze its complexity and stability. 

\subsection{Conditioning}
\label{conditioning-subsection}

We take our definition of conditioning from Trefethen and Bau \cite[p. 90]{trefethen}. For a function $f:\R^n \to \R^m$, denote
\begin{equation}
	\delta f(x) = f(x + \delta x) - f(x),
\end{equation}
where $x, \delta x \in \R^n$. The relative condition number of $f$ at $x \in \R^n$ is
\begin{equation}
	\kappa(x) = \lim_{\delta \to 0}\sup_{\|\delta x\| \leq \delta} \left.\frac{\|\delta f(x)\|}{\|f(x)\|} \middle/ \frac{\|\delta x\|}{\|x\|} \right. = \lim_{\delta \to 0}\sup_{\|\delta x\| \leq \delta} \frac{\|\delta f(x)\|}{\|\delta x\|}\frac{\|x\|}{\|f(x)\|}.\label{eq:relative-condition-number}
\end{equation}
In this section, we give bounds on $\kappa = \sup_{x} \kappa(x)$ for the function $f(T, M, \mu) = \tr(MDT^*)$ defined by \eqref{reducible-expectation}. We treat this as three separate conditioning problems by analyzing the conditioning of $f$ with respect to each input $M, T,$ and $\mu$ individually; see, for example \cite[Lecture 18]{trefethen}. This affords an understanding of the sensitivity of \eqref{reducible-expectation} to perturbations in each input.

The factor $\|x\|/\|f(x)\|$ in \eqref{eq:relative-condition-number} is independent of the perturbation $\delta x$ and may be pulled outside the limit. Our general approach in bounding the condition number is to evaluate, for each input separately, the absolute condition number
\begin{equation}
	\lim_{\delta \to 0} \sup_{\|\delta x\| \leq \delta} \frac{\|\delta f(x)\|}{\|\delta x\|}
\end{equation}
and then multiply by $\|x\|/\|f(x)\|$ to obtain \eqref{eq:relative-condition-number}.

Although the one-norm is a natural choice for column-stochastic matrices, $M$ and $D$ are not stochastic and the trace in \eqref{reducible-expectation} corresponds more naturally to the Frobenius inner product on the space of matrices. Therefore, we give bounds on the condition number $\kappa$ in terms of the Frobenius norm $\|A\|_F = \sqrt{\tr(A^*A)}$. By Cauchy-Schwarz, $|\tr(A^*B)| \leq \|A\|_F\|B\|_F.$ Furthermore, the Frobenius norm satisfies the submultiplicative property, that is, $\|AB\|_F \leq \|A\|_F\|B\|_F$. Therefore,
\begin{equation}\label{eq:trace-bound}
	|\tr(MDT^*)| = |\tr(T^*MD)| \leq \|T\|_F\|M\|_F\|D\|_F.
\end{equation}

\begin{theorem}\label{Frobenius-conditioning-theorem}
	Set
	\begin{equation}
		\kappa = \frac{\|M\|_F\|T\|_F\|(I - A_T)^{-1}\|_2}{|\tr(MDT^*)|}.
	\end{equation}
	The relative condition numbers for the expectation of transition events have the following bounds:
\begin{subequations}
	\begin{eqnarray}
		\kappa_M &\leq& \kappa, \label{eq:kappa-M}  \\
		\kappa_T &\leq& \kappa(1 + \|T\|_F\|(I - A_T)^{-1}\|_2), \label{eq:kappa-T} \\
		\kappa_\mu &\leq& \kappa. \label{eq:kappa-mu}
	\end{eqnarray}
\end{subequations}
\end{theorem}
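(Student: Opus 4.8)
The plan is to treat the three relative condition numbers $\kappa_M$, $\kappa_T$, and $\kappa_\mu$ as independent problems, as indicated in \S\ref{conditioning-subsection}: in each case I would first bound the absolute condition number (the operator norm, in the Frobenius metric, of the derivative of $f$ with respect to the chosen argument) and then multiply by the relative factor $\|x\|/|f(x)|$. Two estimates do almost all of the work. The first is the trilinear trace bound behind \eqref{eq:trace-bound}, namely $|\tr(ADB^*)| \le \|A\|_F\|D\|_F\|B\|_F$. The second is the auxiliary inequality $\|D\|_F \le \|(I-A_T)^{-1}\|_2$, which holds because $D=\diag(Q^-\mu)$ is diagonal, so $\|D\|_F = \|Q^-\mu\|_2 \le \|Q^-\|_2\|\mu\|_2$, while $\|Q^-\|_2 = \|(I-A_T)^{-1}\|_2$ by the block form \eqref{Q-equation} and $\|\mu\|_2\le\|\mu\|_1=1$ by stochasticity.

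For $\kappa_M$, with $T$ and $\mu$ (hence $D$) held fixed, $f$ is linear in $M$, so $\delta_M f = \tr(\delta M\,DT^*)$. The trace bound gives $|\delta_M f|\le \|T\|_F\|D\|_F\|\delta M\|_F$, so the absolute condition number is at most $\|T\|_F\|D\|_F$; multiplying by $\|M\|_F/|f|$ and using $\|D\|_F\le\|(I-A_T)^{-1}\|_2$ yields \eqref{eq:kappa-M}. The argument for $\kappa_\mu$ is the mirror image: with $M,T$ fixed, $D$ depends linearly on $\mu$ through $\delta D = \diag(Q^-\delta\mu)$, so $\|\delta D\|_F = \|Q^-\delta\mu\|_2 \le \|(I-A_T)^{-1}\|_2\|\delta\mu\|_2$. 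Applying the trace bound to $\delta_\mu f = \tr(M\,\delta D\,T^*)$ gives absolute condition number at most $\|M\|_F\|T\|_F\|(I-A_T)^{-1}\|_2$, and multiplying by $\|\mu\|_2/|f|\le 1/|f|$ produces \eqref{eq:kappa-mu}.

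The case $\kappa_T$ is the crux, because $T$ enters $f$ twice: explicitly through the factor $T^*$, and implicitly through $D=\diag(Q^-\mu)$, since $Q^-$ contains $(I-A_T)^{-1}$. To first order I would split $\delta_T f = \tr(MD\,\delta T^*) + \tr(M\,\delta D\,T^*)$. The explicit term is bounded exactly as in the $M$ case by $\|M\|_F\|D\|_F\|\delta T\|_F \le \|M\|_F\|(I-A_T)^{-1}\|_2\|\delta T\|_F$. For the implicit term I would invoke the resolvent identity $(I-A_T-\delta A_T)^{-1}-(I-A_T)^{-1} = (I-A_T-\delta A_T)^{-1}\,\delta A_T\,(I-A_T)^{-1}$, where $\delta A_T$ is the transient block of $\delta T$; to leading order this gives $\|\delta Q^-\|_2 \le \|(I-A_T)^{-1}\|_2^2\,\|\delta A_T\|_2$. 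Since $A_T$ is a submatrix of $T$ we have $\|\delta A_T\|_2\le\|\delta T\|_2\le\|\delta T\|_F$, so $\|\delta D\|_F = \|\delta Q^-\mu\|_2 \le \|(I-A_T)^{-1}\|_2^2\|\delta T\|_F$, and the trace bound controls the implicit term by $\|T\|_F\|M\|_F\|(I-A_T)^{-1}\|_2^2\|\delta T\|_F$.

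Adding the two contributions, the absolute condition number with respect to $T$ is at most $\|M\|_F\|(I-A_T)^{-1}\|_2\bigl(1+\|T\|_F\|(I-A_T)^{-1}\|_2\bigr)$, and multiplying by the relative factor $\|T\|_F/|f|$ gives precisely \eqref{eq:kappa-T}. I expect the implicit dependence to be the main obstacle: one must extract the \emph{first-order} variation of $(I-A_T)^{-1}$ and verify that the remainder is genuinely $O(\|\delta T\|_F^2)$, so that it drops out of the $\delta\to0$ limit defining $\kappa(x)$. The resolvent identity handles the leading term cleanly and is exactly what produces the extra power of $\|(I-A_T)^{-1}\|_2$ distinguishing the $T$ bound from the other two; the remaining care is purely in confirming that the higher-order terms do not affect the limit.
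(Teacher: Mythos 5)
Your proposal is correct and follows essentially the same route as the paper: the same per-input treatment, the same trace bound and estimate $\|D\|_F \le \|(I-A_T)^{-1}\|_2$, the same linear arguments for $M$ and $\mu$, and the same explicit/implicit split for $T$ --- your resolvent identity is exactly the paper's relation $\delta\tilde\nu_T = (I-A_T-\delta A_T)^{-1}\delta A_T\tilde\nu$ expressed at the operator rather than the vector level. The only cosmetic difference is that the paper sidesteps your first-order/remainder bookkeeping by using the exact decomposition $\delta f(T) = \tr(M(D_{T+\delta T}-D)T^*) + \tr(MD_{T+\delta T}\,\delta T^*)$ and then letting $\delta \to 0$, at which point $(I-A_T-\delta A_T)^{-1} \to (I-A_T)^{-1}$ and all higher-order terms vanish automatically.
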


\begin{proof}
	Recall from Theorem~\ref{reducible-chains-theorem} and \eqref{Q-equation} that $D = \diag(\nu)$, where $\nu = Q^-\mu$. Since $\mu$ is stochastic and $\|\cdot \|_2 \leq \|\cdot \|_1$ we obtain the bound $\|\mu\|_2 \leq \|\mu\|_1 = 1$. Therefore,
\begin{equation}\label{eq:D-and-nu-bounds}
	\|D\|_F = \paren{\sum_{i=1}^n d_{ii}^2}^{1/2} = \|\nu\|_2 = \|Q^- \mu \|_2 \leq \|(I - A_T)^{-1}\|_2.
\end{equation}
	For $\kappa_M$, fix $T$ and $\mu$ and treat $f(M) = \tr(MDT^*)$ as a function of $M$ only. We remark that $D$ is independent of $M$. Therefore, for a perturbation $\delta M$ of $M$ we obtain
	\begin{eqnarray*}
		\|\delta f(M)\|_F &=& |\tr((M+\delta M)DT^*) - \tr(MDT^*)| \\
		 	&=& |\tr(\delta MDT^*)| \leq \|\delta M\|_F\|T\|_F\|(I - A_T)^{-1}\|_2
	\end{eqnarray*}
	by \eqref{eq:trace-bound} and \eqref{eq:D-and-nu-bounds}. Hence,
	\[
		\lim_{\delta \to 0}\sup_{\|\delta M\|_F \leq \delta} \frac{\|\delta f(M)\|_F}{\|\delta M\|_F} \leq \|T\|_F\|(I - A_T)^{-1}\|_2.
	\]
	Multiplying by $\|M\|_F/\|f(M)\|_F = \|M\|_F/|\tr(MDT^*)|$ we obtain \eqref{eq:kappa-M}.

	The matrix $D$ depends on both $T$ and $\mu$. We denote by $D_{T + \delta T}$ and $D_{\mu + \delta \mu}$ the diagonal matrix obtained from $T + \delta T$ and $\mu + \delta \mu$, respectively, and use a similar notation for $\nu$. A perturbation $\delta T$ of $T$ causes a perturbation in $Q^-$. If $\delta A_T$ is the submatrix of $\delta T$ corresponding to $A_T$, then
\begin{equation}
		\begin{bmatrix} (I - A_T - \delta A_T)^{-1} & 0 \\ 0 & 0 \end{bmatrix} = (Q - \delta Q)^-,
\end{equation}
where $\delta Q$ is $\delta A_T$ padded with zeros. In the limit as $\delta \to 0$, $\|\delta A_T\|_F \leq \|\delta T\|_F \leq \delta$ implies that the inverse $(I - A_T - \delta A_T)^{-1}$ exists. Note that
\begin{equation}
	\nu = Q^-\mu = \begin{bmatrix} (I - A_T)^{-1} & 0 \\ 0 & 0 \end{bmatrix}\mu = \begin{bmatrix} \tilde \nu \\ 0 \end{bmatrix},
\end{equation}
where $\tilde \nu \in \R^t$ is the transient portion of $\nu$. If $\tilde \mu$ and $\delta \tilde\nu_T$ also represent the transient portions of $\mu$ and $\delta \nu_T$, respectively, then
\begin{equation}
	(I - A_T - \delta A_T)(\tilde \nu + \delta \tilde \nu_T) = \tilde \mu.
\end{equation}
Since $(I - A_T)\tilde \nu = \tilde \mu$, it follows that
\begin{equation}\label{d-nu-d-T}
	\delta\tilde\nu_T = (I - A_T - \delta A_T)^{-1}\delta A_T\tilde\nu.
\end{equation}
Consider $f(T) = \tr(MDT^*)$ as a function of $T$ only, where $M$ and $\mu$ are fixed. Then
\begin{eqnarray}
	\|\delta f(T)\|_F &=& |\tr(MD_{T+\delta T}(T + \delta T)^*) - \tr(MDT^*)|\nonumber \\ 
		&\leq& |\tr(M(D_{T + \delta T}-D)T^*)| + |\tr(MD_{T + \delta T}\delta T^*)|\nonumber \\
		&\leq& \|M\|_F\|D_{T + \delta T}-D\|_F\|T\|_F + \|M\|_F\|D_{T + \delta T}\|_F\|\delta T\|_F, \label{eq:f(T)-bound}
\end{eqnarray}
by \eqref{eq:trace-bound}. We require bounds on $\|D_{T + \delta T}-D\|_F$ and $\|D_{T + \delta T}\|_F$. In terms of $\nu$ we have $\|D_{T + \delta T} - D\|_F = \|\nu + \delta \nu_T - \nu\|_2 = \|\delta \nu_T\|_2.$ Since $\|\nu\|_2 = \|\tilde \nu\|_2$ and $\|\delta \nu_T\|_2 = \|\delta \tilde \nu_T\|_2$, applying \eqref{d-nu-d-T} yields
	\begin{equation}
	 \|\delta \nu_T\|_2 \leq \|(I - A_T - \delta A_T)^{-1}\|_2 \|\delta A_T\|_2 \|\nu\|_2.
	\end{equation}
	Clearly, $\|\delta A_T\|_2 \leq \|\delta T\|_2 \leq \|\delta T\|_F$. Combining this fact with \eqref{eq:D-and-nu-bounds} we obtain,
	\begin{equation}\label{eq:first-term-kappa-T} 
		\|D_{T + \delta T} - D\|_F = \|\delta \nu_T\|_2 \leq \|(I - A_T - \delta A_T)^{-1}\|_2 \|(I - A_T)^{-1}\|_2\|\delta T\|_F.
	\end{equation}
	We now turn our attention to $\|D_{T + \delta T}\|_F = \|\nu + \delta \nu_T\|_2 \leq \|\nu\|_2 + \|\delta \nu_T\|_2.$ Using \eqref{eq:D-and-nu-bounds} and \eqref{eq:first-term-kappa-T},
\begin{equation}
	\|D_{T + \delta T}\|_F \leq \|(I - A_T)^{-1}\|_2 + \|(I - A_T - \delta A_T)^{-1}\|_2\|(I - A_T)^{-1}\|_2\|\delta T\|_F.\label{eq:second-term-kappa-T}
\end{equation}
Putting \eqref{eq:f(T)-bound}, \eqref{eq:first-term-kappa-T}, and \eqref{eq:second-term-kappa-T} together, we have
\begin{subequations}
	\begin{eqnarray}
		\frac{\|\delta f(T)\|_F}{\|\delta T\|_F} &\leq& \|M\|_F\|(I - A_T)^{-1}\|_2 \\
		 	&+& \|M\|_F\|T\|_F\|(I-A_T-\delta A_T)^{-1}\|_2\|(I-A_T)^{-1}\|_2\label{eq:second-term} \\
			&+& \|M\|_F\|(I-A_T-\delta A_T)^{-1}\|_2\|(I-A_T)^{-1}\|_2\|\delta T\|_F.\label{eq:last-term}
	\end{eqnarray}
\end{subequations}
In the limit as $\delta \to 0$, \eqref{eq:last-term} is zero and $(I - A_T - \delta A_T)^{-1} = (I - A_T)^{-1}$ in \eqref{eq:second-term}, hence
\[
	\lim_{\delta \to 0}\sup_{\|\delta T\|_F \leq \delta} \frac{\|\delta f(T)\|_F}{\|\delta T\|_F} \leq \|M\|_F\|(I - A_T)^{-1}\|_2\left(1 + \|T\|_F\|(I-A_T)^{-1}\|_2\right).
\]
Multiplying by $\|T\|_F/\|f(T)\|_F = \|T\|_F/|\tr(MDT^*)|$ we obtain \eqref{eq:kappa-T}.

	Denote by $\delta \nu_\mu$ the change in $\nu$ due to a perturbation $\delta \mu$ of $\mu$. This satisfies
\begin{equation} 
	\nu + \delta \nu_\mu = Q^-(\mu + \delta \mu).
\end{equation}
By multiplying and canceling equal terms, we obtain
\begin{equation}\label{d-nu-d-mu}
	\delta{\nu}_\mu = Q^-\delta\mu.	
\end{equation}

	We now consider $f(\mu) = \tr(MDT^*)$ as a function of $\mu$, where $M$ and $T$ are fixed. Applying \eqref{eq:trace-bound} we obtain
	\[ \|\delta f(\mu)\|_F = |\tr(MD_{\mu + \delta \mu}T^*) - \tr(MDT^*)| \leq \|M\|_F\|T\|_F\|\delta \nu_\mu\|_2. \]
	Using \eqref{d-nu-d-mu} we have $\|\delta \nu_\mu\|_2 \leq \|(I - A_T)^{-1}\|_2\|\delta \mu\|_2.$ Thus,
	\begin{equation} 
		\lim_{\delta \to 0}\sup_{\|\delta \mu\| \leq \delta} \frac{\|\delta f(\mu)\|_F}{\|\delta \mu\|_2} \leq \|M\|_F\|T\|_F\|(I - A_T)^{-1}\|_2.\label{eq:mu-absolute-condition}
	\end{equation}
	Since $\mu$ is stochastic, $\|\mu\|_F = \|\mu\|_2 \leq \|\mu\|_1 = 1$. Hence, multiplying \eqref{eq:mu-absolute-condition} by $\|\mu\|_F/\|f(\mu)\|_F \leq 1/|\tr(MDT^*)|$ we obtain \eqref{eq:kappa-mu}.\qquad
\end{proof}

Since $T$ is stochastic, $\|T\|_F \leq \sqrt{n}$. In all the examples in \S \ref{examples-section}, $\|M\|_F$ is no more than order $n^2$. Therefore, the magnitude of $\kappa$ depends primarily on two factors: $\|(I - A_T)^{-1}\|_2$ and $|\tr(MDT^*)|$. As $I - A_T$ becomes singular, $\|(I - A_T)^{-1}\|_2$ is unbounded. In this case, the conditioning may be poor, which is to be expected since the conditioning of the linear system $(I - A_T)\nu = \mu$ is also poor.

The conditioning may also be poor if $\tr(MDT^*)$ is close to zero, particularly when $\|M\|_F\|T\|_F\|(I - A_T)^{-1}\|_2$ is relatively large. As the trace is a summation, cancelation of large magnitude terms with opposite signs results in poor conditioning. However, in all the examples in \S \ref{examples-section}, $M$ is nonnegative. Since $D$ and $T$ are always nonnegative, cancellation is not a problem in this case, although the order of summation may affect roundoff errors; see \cite[p. 63]{higham}.

Even when $M$ is nonnegative, $\tr(MDT^*)$ may be small due to orthogonality. Recall that $\tr(A^*B)$ is the Frobenius inner product on $\R^{m \times n}$. Therefore, $\tr(MDT^*) = \tr(DT^*M) = \br{TD, M}_F = \|TD\|_F\|M\|_F\cos \theta$ where $\theta$ is the angle between $TD$ and $M$. If these matrices are nearly orthogonal, the condition number may be large. This orthogonality often results from measuring events that are very unlikely to occur.

The quadratic dependence on $\|T\|_F\|(I - A_T)^{-1}\|_2$ in the upper bound for $\kappa_T$ is to be expected since $E_\mu Y_M = \tr(MDT^*)$ depends on $T$ in two places: the product $DT^*$ and the computation of $\nu$.
	
		
\subsection{Implementation}
\label{implementation-subsection}

In this section we provide an algorithm for computing \eqref{reducible-expectation}. As before, $\tilde \mu$ and $\tilde \nu$ are the first $t$ entries of $\mu$ and $\nu$, respectively, where $t$ is the number of transient states. Let $M_j \cdot T_j$ denote the standard inner product of the $j^{th}$ columns of $M$ and $T$. Then \eqref{reducible-expectation} may be expressed as
\begin{equation}\label{eq:modified-expectation}
	\tr(MDT^*) = \sum_{i=1}^n [ (M \odot T) \nu]_i = \sum_{j=1}^t \nu_j M_j \cdot T_j.
\end{equation}

\addtocounter{theorem}{1}
{\sc Algorithm \thetheorem.} \label{algorithm} The following computes \eqref{eq:modified-expectation} for the inputs $T, M$, and $\mu$ where $T$ is in canonical form \eqref{reducible-canonical-form}.
\begin{enumerate}
\item Solve $(I - A_T)\tilde \nu = \tilde \mu$ by forming the $QR$ factorization of $(I - A_T)$ using Householder relfections; see, for example \cite{higham, trefethen}.
\item Compute the first $t$ columns of $R = TD$, where $D = \diag(\nu)$ by scaling the $j^{th}$ column of $T$ by $\nu_j$.
\item Compute $\psi = \sum_{j=1}^t M_j \cdot R_j.$
\end{enumerate}
\vspace{4pt}

We refer to Steps 1-2 as the {\em setup}. This portion of the algorithm depends only on $T$ and $\mu$. Furthermore, Step 3 depends only on $M$. If several transition events are to be determined for the same chain and initial distribution, the setup need only be computed once.

\emph{Remark.} The matrix $I - A_T$ is invertible and diagonally dominant by columns. Gaussian Elimination on such a system requires no pivots and is stable~\cite{higham}. However, the theoretical bounds for Gaussian Elimination are insufficient to provide satisfactory bounds for Algorithm~\ref{algorithm} beyond $n \approx 2300$. It is well-known that Gaussian Elimination generally performs much better in practice than numerical analysis suggests~\cite{higham}. This does not change the asymptotic complexity of Algorithm~\ref{algorithm} but does improve the constants.


\subsection{Complexity}
\label{complexity-subsection}

Recall that $I - A_T \in \R^{t \times t}$, where $t$ is the number of transient states. It is well known that the temporal complexity of Step 1 is $O(t^3)$ and the spatial complexity is $O(t^2)$; see, for example~\cite{demmel, higham, trefethen}. Steps 2 and 3 both have temporal and spatial complexity $O(nt)$. Therefore, the setup requires $O(t^3 + nt)$ time and $O(nt)$ space. Once the setup is completed, \eqref{eq:modified-expectation} may be computed in $O(nt)$ time and space for each mask representing a transition event.


\subsection{Stability}
\label{stability-subsection}
In this section we give bounds on the backward errors introduced in the computation of Algorithm~\ref{algorithm}. We rely on the notation of Higham~\cite{higham}. In particular, let $u$ denote the unit roundoff and let
\begin{equation}\label{eq:gamma-definition}
	\gamma_k = \frac{ku}{1 - ku}, \qquad\mbox{and}\qquad\tilde \gamma_k = \frac{cku}{1 - cku},
\end{equation}
where $c$ is a small integer constant independent of $k$. The following result is useful in manipulating bounds involving $\gamma_k$.
\begin{lemma}[see~{\cite[pp. 67]{higham}}]\label{lemma:gamma-rules}
If $|\delta| \leq \gamma_k$ and $|\epsilon| \leq \gamma_j$ then $(1 + \delta)(1 + \epsilon) = (1 + \xi)$ where $|\xi| \leq \gamma_{k + j}$.
\end{lemma}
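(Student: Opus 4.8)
The statement to prove is an elementary algebraic fact about the quantities $\gamma_k = ku/(1-ku)$, so the plan is to reduce it to a single scalar inequality and verify that inequality directly. First I would expand the product: writing $(1+\delta)(1+\epsilon) = 1 + (\delta + \epsilon + \delta\epsilon)$, the natural choice is $\xi = \delta + \epsilon + \delta\epsilon$. By the triangle inequality and the submultiplicativity of absolute value, together with the hypotheses $|\delta| \leq \gamma_k$ and $|\epsilon| \leq \gamma_j$,
\[
|\xi| \leq |\delta| + |\epsilon| + |\delta|\,|\epsilon| \leq \gamma_k + \gamma_j + \gamma_k\gamma_j.
\]
Thus the entire lemma reduces to proving the clean inequality $\gamma_k + \gamma_j + \gamma_k\gamma_j \leq \gamma_{k+j}$.

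To establish this, I would introduce the abbreviations $a = ku$ and $b = ju$, so that $(k+j)u = a+b$ and $\gamma_{k+j} = (a+b)/(1-a-b)$. Placing the left-hand side over the common denominator $(1-a)(1-b)$ and simplifying the numerator gives
\[
\gamma_k + \gamma_j + \gamma_k\gamma_j = \frac{a(1-b) + b(1-a) + ab}{(1-a)(1-b)} = \frac{a + b - ab}{1 - (a+b) + ab}.
\]
Writing $s = a+b$ and $p = ab$, the claim becomes $\dfrac{s-p}{1-s+p} \leq \dfrac{s}{1-s}$. Since both denominators are positive (here I use the standing assumption that $(k+j)u < 1$, whence $1-s>0$ and therefore $1-s+p>0$ as $p\geq 0$), cross-multiplication is legitimate and reduces the claim to $(s-p)(1-s) \leq s(1-s+p)$. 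Expanding both sides, the $s - s^2$ and $sp$ terms cancel, leaving the trivially true inequality $-p \leq 0$, which holds because $p = ab = kju^2 \geq 0$.

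There is no genuine obstacle here; the only point requiring care is tracking the positivity of the denominators so that the cross-multiplication preserves the direction of the inequality, which is exactly where the implicit domain restriction $(k+j)u < 1$ enters. Assembling the pieces, we conclude $|\xi| \leq \gamma_k + \gamma_j + \gamma_k\gamma_j \leq \gamma_{k+j}$, which is the assertion of the lemma.
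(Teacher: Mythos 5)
Your proof is correct, and it is worth noting at the outset that the paper itself offers no proof of this lemma at all: it is quoted verbatim from Higham \cite[pp.~67]{higham} and used as a black box in the stability analysis of Algorithm 4.2. So there is nothing in the paper to match your argument against line by line; what you have done is supply the elementary verification that the paper delegates to the reference. Your key steps all check out: the choice $\xi = \delta + \epsilon + \delta\epsilon$ is the right one; monotonicity of $x + y + xy$ in each nonnegative argument gives $|\xi| \leq \gamma_k + \gamma_j + \gamma_k\gamma_j$; and the reduction of $\gamma_k + \gamma_j + \gamma_k\gamma_j \leq \gamma_{k+j}$ to $-p \leq 0$ via the substitutions $s = a+b$, $p = ab$ is clean algebra,
\begin{equation*}
\frac{a}{1-a} + \frac{b}{1-b} + \frac{ab}{(1-a)(1-b)} = \frac{a+b-ab}{1-(a+b)+ab} = \frac{s-p}{1-s+p} \leq \frac{s}{1-s},
\end{equation*}
where the cross-multiplication is justified because both denominators are positive. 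You were also right to flag the implicit standing hypothesis $(k+j)u < 1$ (without it $\gamma_{k+j}$ is not even a meaningful bound, and the sign of $1-s$ would flip the inequality); this hypothesis is indeed part of the convention under which Higham's $\gamma_k$ notation is used, and the paper inherits it silently. In short: your argument is a complete, self-contained replacement for the citation, which is strictly more than the paper provides.
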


\begin{theorem}
Given $T$, $M$ and $\mu$ the value $\hat \psi$ computed by Algorithm~\ref{algorithm} is the exact solution for the inputs $T + \Delta T, M + \Delta M$, and $\mu$, where $\Delta T$ and $\Delta M$ satisfy the following column-wise bounds
\begin{equation}\label{eq:stability-theorem}
	\|\Delta T_j\|_2 \leq 2\sqrt{n}\tilde\gamma_{n^2}\|T_j\|_2, \qquad \mbox{and} \qquad \|\Delta M_j\|_2 \leq \frac{(1 + 2\sqrt{n})\tilde\gamma_{n^2}}{\sqrt{1 - 4\sqrt{n}\tilde\gamma_{n^2}}} \|M_j\|_2,
\end{equation}
provided $1 - 4\sqrt{n}\tilde\gamma_{n^2} > 0$.
\end{theorem}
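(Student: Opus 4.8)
The plan is to run a backward error analysis through the three steps of Algorithm~\ref{algorithm} in the style of Higham~\cite{higham}, allocating the accumulated rounding so that the linear solve (Step~1) is charged entirely to $\Delta T$, while the column scaling (Step~2) and the inner-product accumulation (Step~3) are charged entirely to $\Delta M$. The elementary $(1+\delta)$ factors from each step will be merged using Lemma~\ref{lemma:gamma-rules}, turning products of $\gamma_k$-bounded quantities into a single $\tilde\gamma_{n^2}$-bounded factor. The target identity is $\hat\psi = \sum_{i,j}(M+\Delta M)_{i,j}\,\hat\nu_j\,(T+\Delta T)_{i,j}$, where $\hat\nu$ is the computed transient solution; the subtlety, saved for last, is that $\Delta T$ is forced to appear both in the solve and in the explicit $T$ factor of this sum.

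First I would invoke the standard backward stability of the Householder $QR$ solve: the computed $\hat\nu$ is the exact transient solution of $(I - A_T + \Delta A_1)\hat\nu = \tilde\mu$, with column-wise bound $\|(\Delta A_1)_j\|_2 \le \tilde\gamma_{n^2}\|(I - A_T)_j\|_2$. Since $A_T$ is a submatrix of $T$, perturbing $I - A_T$ by $\Delta A_1$ is a perturbation $\Delta T$ of $T$ supported on the $A_T$ block, and setting that block of $\Delta T$ to $-\Delta A_1$ makes $\hat\nu$ exactly the perturbed transient solution. Column-stochasticity supplies the constant: $\|(I - A_T)_j\|_2 \le \|(I - A_T)_j\|_1 \le 2$ while $\|T_j\|_2 \ge \|T_j\|_1/\sqrt{n} = 1/\sqrt{n}$, so $\|(I - A_T)_j\|_2 \le 2\sqrt{n}\,\|T_j\|_2$, yielding the claimed bound $\|\Delta T_j\|_2 \le 2\sqrt{n}\,\tilde\gamma_{n^2}\|T_j\|_2$.

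Next I would track Steps~2 and~3. The scaling $\hat R_{i,j} = \mathrm{fl}(\hat\nu_j T_{i,j})$ introduces a relative error of at most $u$ per entry, and the accumulation of at most $nt \le n^2$ products in $\hat\psi = \mathrm{fl}\bigl(\sum_j\sum_i M_{i,j}\hat R_{i,j}\bigr)$ introduces a further per-entry relative error bounded by the standard inner-product result~\cite{higham}. Combining the two factors via Lemma~\ref{lemma:gamma-rules} gives $\hat\psi = \sum_{i,j} M_{i,j}\,\hat\nu_j\,T_{i,j}\,(1 + \eta_{i,j})$ with $|\eta_{i,j}| \le \tilde\gamma_{n^2}$, where $\hat\nu$ is the same computed vector from Step~1.

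The main obstacle is the final reconciliation, since the $\Delta T$ fixed above also sits in the explicit $T$ factor of the target identity, so $\Delta M$ must simultaneously absorb $\eta$ and cancel this contamination. Working one transient column at a time, I would impose the single scalar constraint $\br{(M+\Delta M)_j,\,(T+\Delta T)_j} = \sum_i M_{i,j}T_{i,j}(1+\eta_{i,j})$ and take the minimum-norm correction, namely $\Delta M_j$ parallel to $(T+\Delta T)_j$, so that $\|\Delta M_j\|_2 = |b_j|/\|(T+\Delta T)_j\|_2$ with residual $b_j = \sum_i M_{i,j}T_{i,j}\eta_{i,j} - \sum_i M_{i,j}(\Delta T)_{i,j}$. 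Bounding the two pieces of $b_j$ by Cauchy--Schwarz and by the $\Delta T$ estimate gives $|b_j| \le (1 + 2\sqrt{n})\tilde\gamma_{n^2}\|M_j\|_2\|T_j\|_2$, while $\|(T+\Delta T)_j\|_2 \ge (1 - 2\sqrt{n}\,\tilde\gamma_{n^2})\|T_j\|_2$; the hypothesis $1 - 4\sqrt{n}\,\tilde\gamma_{n^2} > 0$ keeps this factor positive, so the division is legitimate. Finally, the elementary inequality $1 - 2\sqrt{n}\,\tilde\gamma_{n^2} \ge \sqrt{1 - 4\sqrt{n}\,\tilde\gamma_{n^2}}$ converts the denominator into the stated form and gives $\|\Delta M_j\|_2 \le (1 + 2\sqrt{n})\tilde\gamma_{n^2}\|M_j\|_2/\sqrt{1 - 4\sqrt{n}\,\tilde\gamma_{n^2}}$. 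I expect the column-wise division — pushing the $\Delta T$ contamination into $M$ with a clean relative bound — to be the delicate step, and the positivity hypothesis to be precisely what makes it go through.
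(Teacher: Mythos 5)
Your proposal is correct and follows essentially the same route as the paper's proof: backward stability of the Householder solve charged to $\Delta T$, the scaling and inner-product roundoff merged via Lemma~\ref{lemma:gamma-rules}, and a minimum-norm column-wise correction $\Delta M_j$ parallel to $(T+\Delta T)_j$ solving the scalar constraint, bounded by Cauchy--Schwarz under the positivity hypothesis. The only differences are cosmetic: you bound $\|(I-A_T)_j\|_2$ via $\|(I-A_T)_j\|_1 \leq 2$ rather than via $1+\|T_j\|_2$, and you lower-bound the denominator by the triangle inequality together with $1-2x \geq \sqrt{1-4x}$ instead of expanding $\|T_j+\Delta T_j\|_2^2$ directly, both of which yield the identical stated bounds.
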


\begin{proof}
The computed solution obtained in Step 1 satisfies the following column-wise backward error bounds~\cite[p. 361]{higham}:
\[
	(I - A_T - \Delta A_T)\hat \nu = \tilde \mu + \Delta \tilde \mu, \quad \mbox{where } \|\Delta {A_T}_j\|_2 \leq \tilde\gamma_{n^2} \|(I - A_T)_j\|_2, \quad 1 \leq j \leq t,
\]
Since $T_j$ is stochastic, $1 = \|T_j\|_1 \leq \sqrt{n}\|T_j\|_2$, hence 
\[
	\|(I - A_T)_j\|_2 \leq 1 + \|T_j\|_2 \leq (\sqrt{n} + 1)\|T_j\|_2 \leq 2\sqrt{n}\|T_j\|_2.
\]
Setting
\[ 
	\Delta T = \begin{bmatrix} \Delta A_T & 0 \\ 0 & 0 \end{bmatrix},
\]
we obtain the bound
\begin{equation}\label{eq:delta-T-bound}
	\|\Delta T_j\|_2 = \|\Delta {A_T}_j\|_2 \leq 2\sqrt{n}\tilde\gamma_{n^2}\|T_j\|_2.
\end{equation}

Since $D$ is diagonal, the computation in Step 2 to produce the matrix $R = TD$ involves only a single multiplication in each entry of $R$. Therefore, the computed result satisfies,
\[
	\hat R_{i,j} = (1 + \delta_{i,j})\hat \nu_jT_{i,j}, \qquad |\delta_{i,j}| \leq u, \quad 1 \leq i \leq n,\, 1 \leq j \leq t,
\]
where $\delta_{i,j}$ is the relative error caused by roundoff in the multiplication $\hat \nu_jT_{i,j}$. Step 3 is the inner product of two $nt \times 1$ vectors: $\vec M \cdot \vec \hat R$. The computed result satisfies the following bound on backward errors,
\[ 
	\hat \psi = \sum_{j=1}^t\sum_{i=1}^n (1+\epsilon_{i,j})M_{i,j}\hat R_{i,j} = \sum_{j=1}^t \hat \nu_j \sum_{i=1}^n (1+\epsilon_{i,j})(1 + \delta_{i,j}) M_{i,j}T_{i,j},
\]
where $\epsilon_{i,j}$ is the backward error of the $(i,j)$ entry that results from the computation of the inner product and satisfies $|\epsilon_{i,j}| \leq \gamma_{nt}$. This error bound is independent of the order of summation; the bounds may be improved by a careful ordering of the terms \cite[p. 63]{higham}. Since $|\delta_{i,j}| \leq u \leq \gamma_1$, Lemma~\ref{lemma:gamma-rules} guarantees that $(1 + \epsilon_{i,j})(1 + \delta_{i,j}) = (1 + \xi_{i,j})$ where $|\xi_{i,j}| \leq \gamma_{nt+1}$. To obtain \eqref{eq:stability-theorem}, we require a perturbation $\Delta M$ satisfying 
\[
	\sum_{i=1}^n (1+\xi_{i,j})M_{i,j}T_{i,j} = \sum_{i=1}^n (M + \Delta M)_{i,j}(T + \Delta T)_{i,j}, \qquad 1 \leq j \leq t.
\]
Recall that $\Delta T$ was fixed above when solving the system $(I - A_T)\tilde \nu = \tilde \mu$. Canceling the term $M_{i,j}T_{i,j}$ from the summation and regrouping,
\begin{equation}\label{eq:delta-M-system}
	\sum_{i=1}^n \paren{\xi_{i,j}M_{i,j}T_{i,j} - M_{i,j}\Delta T_{i,j}} = \sum_{i=1}^n \Delta M_{i,j}(T_{i,j} + \Delta T_{i,j}), \qquad 1 \leq j \leq t,
\end{equation}
Let $\xi_j$ be the $j^{th}$ column of the matrix $\xi = (\xi_{i,j})$. For each $j$, the left hand side of \eqref{eq:delta-M-system} is the scalar quantity 
\[
	b_j = (\xi_j \odot M_j) \cdot T_j - \Delta T_j \cdot M_j,
\]
where, $\xi_j \odot M_j$ is the Hadamard, or entry-wise product. The system \eqref{eq:delta-M-system} is equivalent to $(T_j + \Delta T_j) \cdot \Delta M_j = b_j$, which, for nonzero $T_j + \Delta T_j$, has as a solution
\begin{equation}\label{eq:delta-M-solution}
	\Delta M_j = \frac{b_j}{\|T_j + \Delta T_j\|_2^2}(T_j + \Delta T_j).
\end{equation}
Using our bound on $\Delta T$, Cauchy-Schwarz guarantees
\begin{eqnarray}
	\|T_j + \Delta T_j\|_2^2 &=& \|T_j\|_2^2 + 2T_j \cdot \Delta T_j + \|\Delta T_j\|_2^2 \geq \|T_j\|_2^2 - 2\|T_j\|_2\|\Delta T_j\|_2\nonumber \\
	 &\geq& \|T_j\|_2^2 - 4\sqrt{n}\tilde\gamma_{n^2}\|T_j\|_2^2 = (1 - 4\sqrt{n}\tilde\gamma_{n^2}) \|T_j\|_2^2 > 0,\label{eq:T-plus-delta-T}
\end{eqnarray}
under the assumption $1 - 4\sqrt{n}\tilde\gamma_{n^2} > 0$. Therefore, the computed $\hat \psi$ is the exact solution \eqref{eq:modified-expectation} for the inputs $M + \Delta M, T + \Delta T$, and $\mu$. In \eqref{eq:delta-T-bound} we gave bounds for $\Delta T$. By Cauchy-Schwarz,
\begin{eqnarray*}
	\|\Delta M_j\|_2 &=& \frac{|b_j|\|T_j + \Delta T_j\|_2}{\|T_j + \Delta T_j\|_2^2} \leq \frac{|(\xi_j \odot M_j) \cdot T_j| + |M_j \cdot \Delta T_j|}{\|T_j + \Delta T_j\|_2} \\
	&\leq& \frac{\gamma_{nt+1}\|M_j\|_2\|T_j\|_2 + \|M_j\|_2\|\Delta T_j\|_2}{\|T_j + \Delta T_j\|_2} \leq \frac{\gamma_{nt+1} + 2\sqrt{n}\tilde\gamma_{n^2}}{\sqrt{1 - 4\sqrt{n}\tilde\gamma_{n^2}}}\|M_j\|_2,
\end{eqnarray*}
by \eqref{eq:delta-T-bound} and \eqref{eq:T-plus-delta-T} and the observation $\|T_j\|_2 \leq \|T_j\|_1 = 1$, since $T_j$ is stochastic. Finally, the bounds $t \leq n-1$ and $n \geq 1$ imply that $nt+1 \leq n^2$, so $\gamma_{nt+1} \leq \gamma_{n^2} \leq \tilde\gamma_{n^2}$ and we obtain \eqref{eq:stability-theorem}.\qquad
\end{proof}

\emph{Remark.} For fixed $n$, \eqref{eq:stability-theorem} simplifies to
\[ 
	\frac{(1 + 2\sqrt{n})\tilde\gamma_{n^2}}{\sqrt{1 - 4\sqrt{n}\tilde\gamma_{n^2}}} \leq \frac{4\sqrt{n}\tilde\gamma_{n^2}}{1 - 4\sqrt{n}\tilde\gamma_{n^2}} = O(\sqrt{n}\tilde\gamma_{n^2}),
\]
as $u \to 0$. The quantity $\Delta M_j$ obtained in \eqref{eq:delta-M-solution} is the solution to the optimization problem
\[
\begin{array}{lll}
	\mbox{minimize} & \: & \|\Delta M_j\|_2 \\
	\mbox{subject to} & &  (T_j + \Delta T_j) \cdot \Delta M_j = b_j.
\end{array}
\]

\section{Simulations}\label{simulations-section}

We conducted a numerical study by computing expectations and comparing them to a Monte Carlo simulation. We used the game Chutes and Ladders (or Snakes and Ladders), which is characterized by a substantial number of states (82) and exhibits a gradual drift towards the absorbing state combined with occasional large jumps. Furthermore, this game is a good illustration of composite Markov chains as discussed in \S \ref{composite-markov-chains-subsection}. The MATLAB script used for computing expectations and the code for the simulations can be found in \cite{jeffs-website}. We simulated the following events in 100 million games and determined the sample mean for each. The results are summarized in Table \ref{simulation-results-table}.

\begin{itemize}
\item Second-To-Last Square: This is the number of times that a player gets ``stuck'' on the second-to-last square for spinning a number larger than 1.
\item Large Ladder Traversal: The number of times a player traverses the largest ladder from square 28 to square 84.
\item Game Length: The number of turns in the game.
\end{itemize}
In addition to the above events the following were simulated for a two-player game.
\begin{itemize}
\item Lead Changes: The number of lead changes in the game as discussed in \S \ref{composite-markov-chains-subsection}.
\item First-player Advantage: This is the indicator event for the first player winning when both players finish on the same turn. In expectation, it is the probability that the first player wins by virtue of being the first player.
\item First-player Win Frequency: This is the indicator event for the first player winning. In expectation, this is the probability that the first player wins.
\end{itemize}

\begin{table}[tbh]
\caption{Comparison of Monte Carlo simulations with computed expectations. }
\label{simulation-results-table}
\begin{center}
\begin{tabular}{|c|c|c|c|}
\hline  & {\bf Sample}  & {\bf Computed} & {\bf Computation}\\
 {\bf Event} & {\bf Mean} & {\bf Expectation} & {\bf Time (sec)} \\
\hline \hline
\multicolumn{4}{|c|}{{\bf Single-Player Events}} \\
\hline \hline
Setup & & & 1.8(-3) \\
Second-To-Last Square & 1.2954 & 1.2958 &  1.3(-4) \\
Large Ladder & 0.5895 & 0.5896 & 1.0(-4) \\
Game Length & 39.596 & 39.598 & 2.9(-4) \\
\hline \hline
\multicolumn{4}{|c|}{{\bf Two-Player Events}} \\
\hline \hline
Setup & & &  2.5 \\
Second-To-Last Square & 1.1159 & 1.1166 & 8.1(-3) \\
Large Ladder & 0.8181 & 0.8180 & 3.2(-2) \\
Game Length & 26.513 & 26.513 & 3.1 \\
Lead Changes & 3.9679 & 3.9679 & 3.4 \\
First-Player Advantage & 0.0156 & 0.0156 & 6.2(-3) \\
First-Player Wins & 0.5078 & 0.5078 & 1.4(-1)  \\
\hline
\end{tabular}
\end{center}
\end{table}

As can be seen in Table \ref{simulation-results-table}, the results agree up to at least three significant digits in every case. The execution time for computing expectations, shown in the last column of the table, indicates that even moderately large problems can feasibly be solved using this approach; the 2-player Chutes and Ladders matrix has over 6500 rows. Parallelization would permit much larger problems, however, we expect that for large $n$, simulation will be faster, just as Monte Carlo integration is more efficient than quadrature for high-dimensional problems.

\newpage

\bibliographystyle{siam}
\bibliography{markov}

\end{document}